\documentclass[submission,copyright,creativecommons]{eptcs}
\providecommand{\event}{ACT 2026} %

\usepackage{iftex}

\ifpdf
  \usepackage{underscore}         %
  \usepackage[T1]{fontenc}        %
\else
  \usepackage{breakurl}           %
\fi

\title{Metacat\\\large{}a categorical framework for formal systems}
\author{Paul Wilson
\institute{Hellas AI}
\email{paul@hellas.ai}
\email{paul@statusfailed.com}
}

\newcommand{\titlerunning}{Metacat}
\newcommand{\authorrunning}{Paul Wilson}

\hypersetup{
  bookmarksnumbered,
  pdftitle    = {\titlerunning},
  pdfauthor   = {\authorrunning},
  pdfsubject  = {EPTCS},               %
}

\usepackage{amsmath, amssymb, amsthm}
\usepackage{mathrsfs} %
\usepackage{hyperref}
\usepackage{graphicx}
\usepackage{float}
\usepackage{xcolor}
\usepackage{enumitem}
\usepackage{stmaryrd}

\newtheorem{observation}{Remark}[section]
\newtheorem{theorem}[observation]{Theorem}
\newtheorem{definition}[observation]{Definition}

\newtheorem{remark}[observation]{Remark}

\newcommand{\cp}[0]{\ensuremath{\fatsemi}} %

\newcommand{\tensor}[0]{\ensuremath{\otimes}}

\newcommand{\imp}[0]{\ensuremath{\Rightarrow}}

\newcommand{\cat}[1]{\ensuremath{\mathcal{#1}}}

\begin{document}
\maketitle

\begin{abstract}
  \label{section:abstract}
  We present a categorical framework for formal systems in which inference rules
with $m$ metavariables over a category of syntax $\mathscr{S}$, taken to be a
cartesian PROP, are represented
by operations of arity $k \to n$ equipped with spans
$k \leftarrow m \to n$
in $\mathscr{S}$, encoding the hypotheses and conclusions in a common
metavariable context.
Composition is by substitution of metavariables, which is
the sole primitive operation, as in Metamath.

Proofs in this setting form a symmetric monoidal category whose monoidal
structure encodes the combination and reuse of hypotheses. This structure
admits a proof-checking algorithm; we provide an open-source implementation
together with a surface syntax for defining formal systems. As a demonstration,
we encode the formulae and inference rules of first-order logic in Metacat, and
give axioms and representative derivations as examples.

\end{abstract}

\section{Introduction}
\label{section:introduction}

Metacat is a framework for presenting formal systems in a way that makes the
compositional structure of proofs explicit.
Inference rules are modeled as transformations of \emph{input syntax} to
\emph{output syntax} by substitution of metavariables.
The resulting proofs are not merely derivation trees written in a different
notation: they assemble into a symmetric monoidal category, so that independent
subproofs can be juxtaposed and reused in a principled way.
This categorical structure gives a semantics for proof assembly and suggests a
natural visual language in terms of string diagrams.
For example, a proof of $\vdash P \imp P$ using the Metamath axioms
$\mathsf{ax\text{-}1}$ (Simp), $\mathsf{ax\text{-}2}$ (Frege), and $\mathsf{ax\text{-}mp}$ (modus
ponens) is depicted as a string diagram below.
\begin{figure}[H]
\centering
\includegraphics[width=\linewidth]{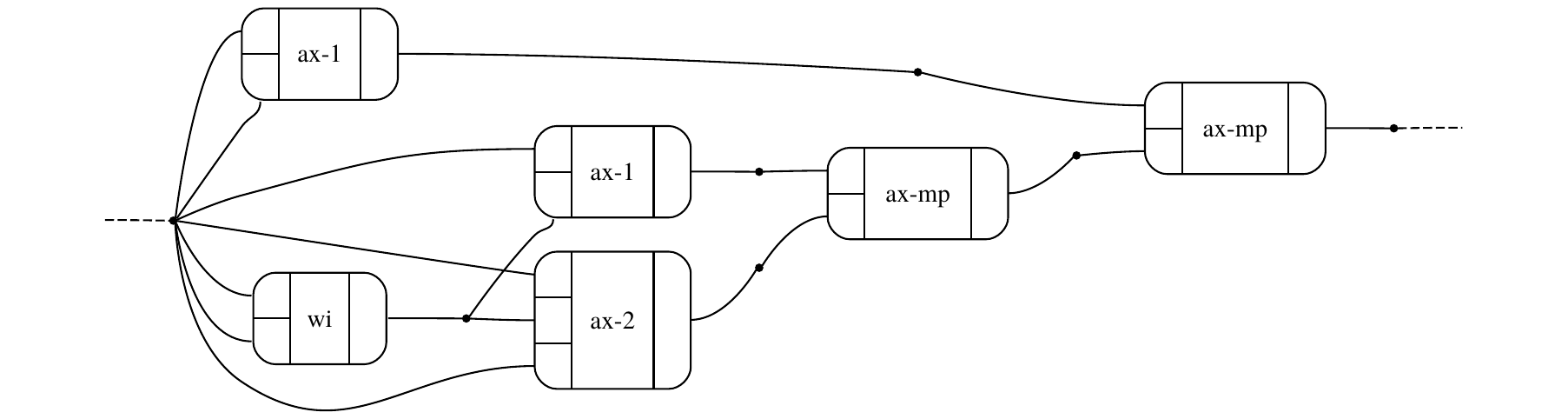}
\caption{A string-diagrammatic proof of $\vdash P \imp P$.}
\label{figure:diagram-of-id}
\end{figure}

This perspective is motivated by desire for a small, core proof kernel.
In Metamath and Metamath Zero for example, the essential operation is
substitution of formulae for metavariables, with very little logic built into
the kernel itself.
We adopt that discipline here.
A minimal substitution-based kernel is attractive for two reasons.
First, as the Metamath Zero thesis makes clear~\cite{mm0}, it is easier to trust and
verify a small checker than a large proof engine with many primitive notions
baked in.
Second, keeping the kernel agnostic about a particular logic makes the
framework flexible: the ambient machinery does not presuppose classical logic,
type theory, or any other foundational choice, but instead lets a concrete
formal system be specified by its syntax and rules.
The categorical presentation clarifies this idea by treating substitution as
composition and by treating collections of hypotheses as monoidal structure.

The categorical viewpoint also addresses two practical issues that arise in
formal syntax.
On the proof side, it provides formally justified tools for visualisation
rather than relying on ad hoc proof drawings.
On the syntax side, it allows for the combinatorial representation of proofs as
cospans of hypergraphs~\cite{sdrt1, sdrt2, sdrt3}, in which connectivity is
primary and concerns about variable names or $\alpha$-equivalence can be
handled structurally.
Our goal is therefore not to design a diagrammatic notation for first-order
logic itself, in the style of systems where the diagrams are the
formulas\cite{dafol}.
Rather, we use categorical and string-diagrammatic methods to expose the
structure of proofs in a general substitutional framework.

The main contribution of this paper is a categorical account of formal proofs
in which proof checking is reduced to evaluation in a symmetric monoidal
category of partial functions.
Proof rules are represented syntactically by spans over a category of syntax,
derivations form a symmetric monoidal category $\cat{P}$, and there is an
operational interpretation as partial functions
\[
  \llbracket - \rrbracket : \cat{P} \to \mathbf{Pfn}
\]
such that a derivation is valid exactly when the corresponding partial map is
defined at the claimed boundary.
This interpretation is realized by Metacat, an implementation of the proof
checker which verifies proofs by evaluating the associated open hypergraph.

\subsection{Related Work}
Metacat is closest in spirit to Metamath and Metamath Zero~\cite{metamath,mm0}.
It shares their emphasis on a tiny trusted core and on explicit substitution,
but recasts that discipline in a language where compositional structure is
first-class.
One way to view the project is as a string-diagrammatic analogue of Metamath
Zero: not a competing foundation, but a reformulation that makes the algebra of
proofs easier to see and manipulate with the tools of category theory.
The difference is one of presentation: rather than treating proofs primarily as
trees of rule applications, we make their compositional structure explicit in a
symmetric monoidal setting.
In this respect the project also draws on categorical work on open graphs,
hypergraphs, and diagrammatic rewriting~\cite{sdrt1,sdrt2,sdrt3}, which provide
natural combinatorial models for syntax with sharing.
By contrast with systems such as Chyp~\cite{chyp}, our proofs are not obtained
by rewriting string diagrams. Rather, derivations are represented explicitly as
diagrams and checked by an external operational semantics.
It also contrasts with proof assistants based on richer built-in logics, such
as Lean and Agda~\cite{deMoura2021lean4,norell2007agda}, where considerably
more logical structure is part of the trusted core.
At the same time, our aim differs from diagrammatic logics such as
diagrammatic first-order logic~\cite{dafol}: the diagrams here are not the
formulas themselves, but rather a representation of the syntax and composition
of proofs.
Finally, unlike proof-net style representations in linear logic
~\cite{girard1987linear,curien2005linear2,mellies2009categorical}, we do not
try to build global correctness directly into the proof syntax; as in
Metamath, the syntax of derivations is simple, and correctness is imposed by a
separate checking semantics.

\subsection{Synopsis}
The remainder of the paper is structured as follows.
Section~\ref{section:syntax-and-proof} introduces the categories of syntax and
proofs used throughout the paper, and
Section~\ref{section:proof-checking} gives the corresponding proof-checking
semantics and operational interpretation.
Section~\ref{section:fol} then illustrates the framework on a fragment of
first-order logic in the style of Metamath.
Finally, Section~\ref{section:conclusions} closes with brief remarks on
implementation and future work.

\section{Syntax and Proof}
\label{section:syntax-and-proof}

This section introduces the basic technical objects used throughout the paper.
We begin with a motivating example from propositional logic, then explain how
syntax with metavariables is represented categorically, and finally describe
how axioms and inference rules generate a category of proofs over this syntax.
The main point is that the interface of a proof already has symmetric monoidal
shape: a proof takes a finite family of hypotheses to a finite family of
conclusions, and larger proofs are built by juxtaposing and composing smaller
ones.

\subsection{A motivating example}

Consider the inference rule of modus ponens. Informally, it consumes two
hypotheses, namely a proposition $A$ and an implication $A \imp B$, and
produces the conclusion $B$.
As a morphism in a symmetric monoidal category, we might think of it as having
the type $A \tensor (A \imp B) \to B$.
This already illustrates two features that will persist in the general theory.
First, inference rules are naturally many-input, many-output operations.
Second, they are parameterised by \emph{metavariables}: the two occurrences of
$A$ and the two occurrences of $B$ are not independent pieces of syntax, but
must be understood as referring to the same placeholders throughout the rule.

This dependence on shared metavariables is exactly where naive representations
of syntax become awkward. If formulas are represented only as strings or parse
trees with named variables, then composition of rules requires bookkeeping for
substitution, matching of repeated variables, and avoidance of accidental name
capture. Those are familiar problems, but they obscure the compositional
content we wish to expose. Our aim is instead to use a representation in which
sharing is explicit and structural.

\subsection{Categories of syntax}

We begin by explicitly formalising the notion of syntax used in this paper.
\begin{definition}\label{def:syntax-category}
  A \emph{category of syntax} $\cat{S}$ is a free cartesian PROP generated by a
  set of operations $\Sigma_1$.
\end{definition}
Here $\Sigma_1$ should be thought of as the collection of term formers of the
language. An operation $n \to 1$ in $\Sigma_1$ is therefore an $n$-ary term
former. In propositional logic, for example, one may take a binary operation
$\imp : 2 \to 1$
for implication and a unary operation
$\neg : 1 \to 1$
for negation as elements of $\Sigma_1$.
A map
\[
  f : m \to 1
\]
is then interpreted as a term with $m$ metavariables, or equivalently as a tree
with $m$ `holes'.
More generally, a map
\[
  f : m \to n
\]
is an $n$-tuple of such terms in a common metavariable context. Under this
viewpoint, maps $0 \to 1$ correspond to object-level terms having no
metavariables, while \emph{generating} maps of type $0 \to 1$ are precisely the
constants of the language.

For example, a composite like
\[
  \imp \cp \neg : 2 \to 1 %
\]
therefore represents the term $\neg (\phi_0 \imp \phi_1)$,
where $\phi_0$ and $\phi_1$ are thought of as \emph{metavariables}.

Notice that this encoding of syntax replaces plain trees by a representation in
which metavariables and their sharing are part of the syntax itself.
For example, the term $\neg (\phi_0 \imp \phi_0)$ - in which $\phi_0$ is shared
- is encoded using the \emph{cartesian} structure of the category by explicitly copying $\phi_0$.

That $\cat{S}$ is \emph{cartesian} is exactly what allows metavariables to be
duplicated or discarded when building terms.
In particular, repeated use of the same metavariable, as for example in the
term $\neg(\phi_0 \imp \phi_0)$, is represented internally by the syntax rather
than imposed as extra bookkeeping on variable names.
This is also why the combinatorial representation of morphisms as cospans of
hypergraphs is natural here: sharing is explicit in the syntax.

\subsection{Categories of proofs}
Equipped with a notion of syntax analogous to formal language,
we now turn our attention to categories of proofs, corresponding to formal
systems.

\begin{definition}\label{def:proof-category}
  A category of proofs $\cat{P}$ over syntax $\cat{S}$
  is the PROP freely generated by operations $\Gamma$ where each operation
  $f : a \to b$ in $\Gamma$ is equipped with a span
  \[
    a \xleftarrow{\;\mathsf{src}(f)\;} m \xrightarrow{\;\mathsf{tgt}(f)\;} b
  \]
  in the syntax category $\cat{S}$, where $m$ is a natural number denoting the
  number of metavariables.
  We call the \emph{arrows} of $\cat{P}$ \emph{derivations}.
\end{definition}

To illustrate this definition, let us return to our earlier example, modus
ponens, which we now assume to be an operation $\mathsf{ax\text{-}mp} : 2 \to 1$ in
$\Gamma$.
We would like to think of it informally as a rule of shape
\[
  A \tensor (A \imp B) \to B,
\]
where $A$ and $B$ are metavariables.
In this case, we have $m = 2$ with the source and target maps given below left
and right, respectively.
\[
  \mathsf{src}(\mathsf{ax\text{-}mp}) :
  (\phi_0,\phi_1) \mapsto (\phi_0,\; \phi_0 \imp \phi_1)
  \qquad\qquad
  \mathsf{tgt}(\mathsf{ax\text{-}mp}) :
  (\phi_0,\phi_1) \mapsto \phi_1
\]
Thus a proof-generating operation carries not only an arity $a \to b$ in the
PROP $\cat{P}$, but also source and target maps in $\cat{S}$ describing its
$a$ hypotheses and $b$ conclusions as tuples of expressions in a common
metavariable context, that is, a span in $\cat{S}$. In the present case,
$\mathsf{src}(\mathsf{ax\text{-}mp}) : 2 \to 2$ encodes the pair of hypotheses
$(\phi_0,\; \phi_0 \imp \phi_1)$, while
$\mathsf{tgt}(\mathsf{ax\text{-}mp}) : 2 \to 1$ encodes the conclusion $\phi_1$.

\begin{remark}
Note that because $\cat{S}$ has cartesian structure, we may alternatively
  regard the $m \to a$ source map as being an $a$-tuple of independent $m \to
  1$ source maps.
  We choose the $m \to a$ representation here so that common subterms built
  from metavariables can be explicitly shared in the source map, but the two
  are equivalent.
\end{remark}

Finally, notice that proof terms in this definition are essentially `untyped':
it is trivial to compose two proof generators whose associated source and target
maps in $\cat{S}$ do not match appropriately.
This is intentional: the syntax exists to be \emph{checked}, rather than to be
a structural representation of a valid proof (as for example in the proof nets
of linear logic~\cite{girard1987linear,curien2005linear2,mellies2009categorical}).

\section{Proof Checking}
\label{section:proof-checking}
We now turn our attention to \emph{checking} proofs.
The section is separated into two parts: the formal semantics of proof
checking, and an algorithm for checking proofs.

In Section \ref{section:syntax-and-proof}, we defined categories of proofs $\cat{P}$
whose arrows were \emph{derivations}.
However, a proof is a derivation of a particular statement: its boundary in
syntax.

\begin{definition}
  Let $\cat{P}$ be a category of proofs over $\cat{S}$,
  let $d : a \to b$ be an arrow in $\cat{P}$,
  and let $s : m \to a$ and $t : m \to b$ be arrows in $\cat{S}$.
  We say that $(s,t)$ is a \emph{type} for $d$.
\end{definition}

An arrow of $\cat{P}$ alone is only a derivation schema.
To regard it as a proof, one must also specify which $a$ hypotheses and which
$b$ conclusions in $\cat{S}$ it is intended to relate.

Types $(s,t)$ play a role analogous to the source and target maps attached to
the generators of $\cat{P}$, but we do not require a derivation to have a
unique type\footnote{
  We suspect that any derivation admits a maximally general type, but do not
  pursue that question here.
}.
This reflects the fact that a single derivation pattern may witness many
different proof instances. For example, the metamath rule $\mathsf{wn}$ states that the
negation of a well-formed formula is well-formed. Its generating source and
target maps send $\phi_0$ to $\mathsf{wff}(\phi_0)$ and
$\mathsf{wff}(\neg(\phi_0))$, respectively. But the same derivation also serves
as a proof of the more specific judgement taking $\mathsf{wff}(\neg(\phi_0))$
to $\mathsf{wff}(\neg(\neg(\phi_0)))$.

\subsection{Valid and Invalid Proofs}
So far, we have not said when a derivation $d : a \to b$ is a \emph{proof} of a
chosen type $(s,t)$.
The point of proof checking is precisely to decide this.

The idea is to map each generator $f$ of a derivation into a partial map
associated to the composite
\[
  \mathsf{src}(f)^- \cp \mathsf{tgt}(f)^+,
\]
where $\mathsf{src}(f)^-$ is a \emph{pattern matcher}, deconstructing the input
syntax to recover metavariables, while $\mathsf{tgt}(f)^+$ constructs the
desired output syntax.
This map is undefined precisely when the input syntax does not conform to the
shape specified by $\mathsf{src}(f)$, meaning that the required hypotheses were
not satisfied.
Since derivations are built inductively from generators, checking a derivation
$d$ satisfies a type $(s, t)$ is done by recursively interpreting generators in
the above way, pre- and post-composing with $s^+$ and $t^-$, respectively, and
evaluating the resulting map.

One can package this as an operational semantics.
From the syntax category $\cat{S}$ one first constructs an auxiliary
symmetric monoidal category which records both term formation and term
matching.

\begin{definition}
  Let $\cat{S}$ be a category of syntax with generators $\Sigma_1$.
  Then $\cat{S}^{\pm}$ is the symmetric monoidal category presented by:
  \begin{itemize}
    \item a special Frobenius structure
      \[
        \mu : 2 \to 1,\qquad \eta : 0 \to 1,\qquad
        \delta : 1 \to 2,\qquad \epsilon : 1 \to 0
      \]
    \item for each generator $g : m \to n$ of $\cat{S}$, a \emph{constructor}
      $g^+ : m \to n$ and naturality equations making $g^+$ a
      comonoid homomorphism with respect to $\delta$ and $\epsilon$;
    \item for each generator $g : m \to n$ of $\cat{S}$, a \emph{matcher}
      $g^- : n \to m$, and dual naturality equations making $g^-$ a
      monoid homomorphism with respect to $\mu$ and $\eta$.
  \end{itemize}
\end{definition}

Thus $\cat{S}^{\pm}$ contains two formal copies of the syntax-generating
operations: one copy for building terms and one for deconstructing them.
By Fox's algebraic presentation~\cite{fox} of cartesian categories, the constructor side
therefore carries a cartesian copy of $\cat{S}$ inside $\cat{S}^{\pm}$, with
copying and discarding represented by $\delta$ and $\epsilon$.
Dually, the matcher side carries an opposite copy of $\cat{S}$, with the
structural maps represented by $\mu$ and $\eta$.
The existence of these two embeddings are formally stated as follows.

\begin{theorem}
  There exist symmetric monoidal functors
  \[
    (-)^+ : \cat{S} \to \cat{S}^{\pm}
    \qquad\text{and}\qquad
    (-)^- : \cat{S}^{\mathrm{op}} \to \cat{S}^{\pm}
  \]
  defined on the generators of $\cat{S}$ as follows:
  \begin{itemize}
    \item for each $g \in \Sigma_1$, one has $(g)^+ = g^+$ and $(g)^- = g^-$;
    \item the cartesian copying and discarding maps of $\cat{S}$ are sent by
      $(-)^+$ to $\delta$ and $\epsilon$, respectively;
    \item the same maps are sent by $(-)^-$ to $\mu$ and $\eta$,
      respectively.
  \end{itemize}
\end{theorem}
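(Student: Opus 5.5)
The plan is to use Fox's theorem~\cite{fox}, in the form of the universal property of free cartesian PROPs. A free cartesian PROP on $\Sigma_1$ is the free symmetric monoidal category on $\Sigma_1$ in which every object carries a commutative comonoid structure $(\Delta, !)$, natural with respect to all morphisms. Equivalently, $\cat{S}$ is presented as a symmetric monoidal category by generators $\Sigma_1 \cup \{\Delta : 1 \to 2, ! : 1 \to 0\}$. The equations are the commutative comonoid axioms for $(\Delta,!)$, together with, for each $g : m \to n$ in $\Sigma_1$, the naturality equations $g \cp \Delta_n = \Delta_m \cp (g \tensor g)$ and $g \cp !_n = !_m$. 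Here $\Delta_k$ and $!_k$ denote the canonical extensions to $k$ wires built from $\Delta$, $!$ and symmetries. Given this presentation, a symmetric monoidal functor out of $\cat{S}$ is determined by, and exists for, any assignment of generators whose images satisfy the presented equations.

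For $(-)^+$, I would send $g \mapsto g^+$, $\Delta \mapsto \delta$ and $! \mapsto \epsilon$, and then check the relations in $\cat{S}^{\pm}$. The commutative comonoid axioms for $(\delta,\epsilon)$ follow from the special Frobenius structure, which in particular contains a comonoid. I take the Frobenius structure to be commutative, as is standard for the copy/discard interpretation; if the intended presentation is not commutative, this must be added, and I would point out that assumption. The naturality equations are exactly the comonoid-homomorphism equations imposed on $g^+$. One subtlety is that the naturality equations must be stated using the canonical $\delta_k,\epsilon_k$ on $k$ wires. These are built in the same way on both sides, since a strict symmetric monoidal functor preserves tensors and symmetries, so the equations correspond verbatim.

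For $(-)^-$, I would use duality. $\cat{S}^{\mathrm{op}}$ is the free cocartesian PROP on $\Sigma_1^{\mathrm{op}}$. Reversing all arrows in the presentation gives generators $g^{\mathrm{op}} : n \to m$, $\Delta^{\mathrm{op}} : 2 \to 1$ and $!^{\mathrm{op}} : 0 \to 1$, with commutative monoid axioms and monoid-homomorphism naturality equations. Sending $g^{\mathrm{op}} \mapsto g^-$, $\Delta^{\mathrm{op}} \mapsto \mu$ and $!^{\mathrm{op}} \mapsto \eta$ respects these relations: the monoid axioms for $(\mu,\eta)$ are part of the Frobenius structure, and the homomorphism equations are the dual naturality equations imposed on $g^-$. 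The universal property of the presented symmetric monoidal category $\cat{S}^{\mathrm{op}}$ then gives $(-)^-$. Both functors can be taken strict, and identity-on-objects as maps of PROPs.

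The main obstacle is the first step: justifying precisely that $\cat{S}$, defined as the free cartesian PROP, admits exactly this presentation, i.e.\ that no further equations are needed. This is the content of Fox's theorem: a symmetric monoidal category in which every object has a uniform commutative comonoid structure natural in all maps is cartesian. Combined with freeness, this identifies the free cartesian PROP on $\Sigma_1$ with the PROP presented above. I would cite~\cite{fox} for this rather than reprove it. The remaining verifications are routine checks on generators.
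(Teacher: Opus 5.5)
Your proposal is correct and follows the same route as the paper's sketch: present $\cat{S}$ by generators and relations, check that the assigned images in $\cat{S}^{\pm}$ satisfy the comonoid axioms and the naturality equations (which hold by the imposed homomorphism equations), and dualise to obtain $(-)^-$. You also spell out two points the paper leaves implicit: the appeal to Fox's theorem that justifies this presentation of the free cartesian PROP, and the requirement that the Frobenius structure be commutative so that $\delta$ inherits the cocommutativity of $\Delta$.
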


\begin{proof}[Proof sketch]
  The assignment on generators is compatible with the structural equations of
  $\cat{S}$ by the imposed naturality relations, and therefore extends
  inductively to composites and tensor products, yielding a symmetric monoidal
  functor $(-)^+ : \cat{S} \to \cat{S}^{\pm}$.
  A similar argument yields $(-)^- : \cat{S}^{\mathrm{op}} \to \cat{S}^{\pm}$.
\end{proof}

The functor $(-)^+$ is the covariant embedding of syntax as term formation.
The functor $(-)^-$ is the contravariant embedding of syntax as pattern
matching.
In particular, if $\Delta : 1 \to 2$ and $! : 1 \to 0$ are the copying and
discarding maps in $\cat{S}$, then $\Delta^+ = \delta$ and $!^+ = \epsilon$,
while $\Delta^- = \mu$ and $!^- = \eta$.
More generally, a syntax map $u : m \to n$ may therefore be read in two
complementary ways: covariantly, as building an $n$-tuple of terms from
$m$ metavariables, and contravariantly, as a matching procedure that tries to
recover $m$ metavariables from an $n$-tuple of terms.

\subsection{Operational Semantics of Derivations}

To make this precise, we interpret arrows of $\cat{S}^{\pm}$ as partial
functions on tuples of rooted syntax trees.
Fix a set $T_{\Sigma_1}$ of trees generated inductively by:
\begin{itemize}
  \item leaves labelled by natural numbers, representing metavariables;
  \item for each generator $g : m \to n$ in $\Sigma_1$, nodes
    $(g,i;t_1,\dots,t_m)$ for $1 \leq i \leq n$ and trees
    $t_1,\dots,t_m \in T_{\Sigma_1}$.
\end{itemize}
Thus a node remembers not only the operation label $g$, but also which output
port $i$ of $g$ it represents.

\begin{definition}
  The \emph{operational semantics} of $\cat{S}^{\pm}$ is the inductively defined
  interpretation of its generators as partial functions on tuples of trees:
  \begin{itemize}
    \item a Frobenius spider $m \to n$ with $m = 0$ creates a fresh leaf and
      returns $n$ copies of it;
    \item a Frobenius spider $m \to n$ with $m > 0$ is defined exactly when all
      $m$ inputs are equal, in which case it returns $n$ copies of that common
      tree;
    \item for a constructor generator $g^+ : m \to n$, the output on
      $(t_1,\dots,t_m)$ is the $n$-tuple
      \[
        ((g,1;t_1,\dots,t_m),\dots,(g,n;t_1,\dots,t_m));
      \]
    \item for a matcher generator $g^- : n \to m$, the input must be an
      $n$-tuple of trees of the form
      \[
        (g,1;t_1,\dots,t_m),\dots,(g,n;t_1,\dots,t_m)
      \]
      with common children $t_1,\dots,t_m$, and the output is then
      $(t_1,\dots,t_m)$; otherwise the function is undefined.
  \end{itemize}
\end{definition}

The meaning of an arbitrary arrow of $\cat{S}^{\pm}$ is then obtained
inductively from this generating data using composition and tensor product.
The special Frobenius structure accounts for equality checking and copying,
while the generators $g^+$ and $g^-$ account for construction and
deconstruction of syntax nodes.
In particular, every syntax map $u : m \to n$ gives rise, via the two
embeddings above, to two arrows
\[
  u^+ : m \to n
  \qquad\text{and}\qquad
  u^- : n \to m
\]
in $\cat{S}^{\pm}$, and hence to two partial functions:
\[
  \llbracket u \rrbracket := \llbracket u^+ \rrbracket
  \qquad\text{and}\qquad
  \llbracket u^- \rrbracket.
\]
The first builds output syntax from metavariable assignments, while the second
attempts to match a tuple of syntax trees against the pattern described by $u$.

\begin{definition}
  The \emph{operational semantics of derivations} is the symmetric monoidal
  functor
  \[
    \llbracket - \rrbracket : \cat{P} \to \mathbf{Pfn}
  \]
  where $\mathbf{Pfn}$ is the category of partial functions
  on tuples of trees, with arrows $m \to n$ given by partial functions
  $T_{\Sigma_1}^m \rightharpoonup T_{\Sigma_1}^n$,
  determined on proof generators $f : a \to b$ by
  \[
    \llbracket f \rrbracket :=
    \llbracket \mathsf{src}(f)^- \cp \mathsf{tgt}(f)^+ \rrbracket.
  \]
\end{definition}

Thus a proof generator $f$ is checked by the partial map
\[
  \llbracket \mathsf{src}(f)^- \cp \mathsf{tgt}(f)^+ \rrbracket,
\]
and the validity criterion for a derivation is simply the following: a
derivation $d$ with proposed boundary $(s,t)$ is valid precisely when
\[
  \llbracket s^+ \rrbracket \cp \llbracket d \rrbracket \cp \llbracket t^- \rrbracket
\]
is defined.

\subsection*{Implementation}
These ideas are realized in the
\href{https://github.com/statusfailed/metacat}{Metacat implementation}, which
provides a surface syntax for defining formal systems together with a checker
for the resulting derivations; the command-line tool can be installed with
\texttt{cargo install metacat-cli}.
Concretely, a derivation is compiled to an open hypergraph, and the
partial-function semantics above is evaluated by a topological traversal of its
operations. The source nodes are first initialized with leaf values,
representing the metavariables supplied at the boundary. One then executes the
operations in topological order, propagating tree values through the
hypergraph. Constructor nodes build syntax trees, matcher nodes inspect and
deconstruct them, and the Frobenius structure accounts for copying and equality
tests. In this way, proof checking becomes an executable evaluation procedure.

\section{First order logic}
\label{section:fol}
As a case study, we now demonstrate how first order logic can be encoded in Metacat.
Our implementation is a direct port from the Metamath~\cite{metamath} standard
library, but we only include a small number of illustrative axioms and propositions.
We begin by defining the relevant fragment of the category of syntax.
For the propositional fragment considered first, the term formers are the
judgement constructor $\mathsf{wff}$ together with the connectives
$\neg$ and $\imp$.
In addition, we include the judgement symbol $\vdash$, which will later be used
to express derivability.
Thus the syntax is generated by the following symbols:
\[
  \mathsf{wff} : 1 \to 1,\qquad
  \vdash : 1 \to 1,\qquad
  \neg : 1 \to 1,\qquad
  \imp : 2 \to 1
\]
Notice that there are two judgement symbols.
The constructor $\mathsf{wff}$ packages a raw formula as a well-formed formula,
while $\vdash$ indicates \emph{provability}.
Although we will not study ill-formed expressions in any detail, the
well-formedness judgement provides a simple first example of how a Metamath
style grammar is represented in the present framework.

\subsection{Well-Formed Formulae}

The first proof generators we introduce are purely \emph{syntactic}.
They express that negation preserves well-formedness, and that implication
takes two well-formed formulae to a well-formed formula.
In Metamath notation these are the rules usually written $\mathsf{wn}$ and
$\mathsf{wi}$.
\[
  \mathsf{wn} : (\phi_0 \mapsto \mathsf{wff}(\phi_0))
  \to (\phi_0 \mapsto \mathsf{wff}(\neg \phi_0))
\]
\[
  \mathsf{wi} : (\phi_0,\phi_1 \mapsto (\mathsf{wff}(\phi_0),\mathsf{wff}(\phi_1)))
  \to (\phi_0,\phi_1 \mapsto \mathsf{wff}(\phi_0 \imp \phi_1))
\]
More explicitly, $\mathsf{wn}$ is a proof generator of arity $1 \to 1$ whose
source map sends a metavariable $\phi_0$ to the single hypothesis
$\mathsf{wff}(\phi_0)$ and whose target map sends $\phi_0$ to the conclusion
$\mathsf{wff}(\neg \phi_0)$.
Similarly, $\mathsf{wi}$ is a proof generator of arity $2 \to 1$ whose source
map sends $(\phi_0,\phi_1)$ to the pair of hypotheses
$(\mathsf{wff}(\phi_0),\mathsf{wff}(\phi_1))$ and whose target map sends the
same metavariables to the conclusion
$\mathsf{wff}(\phi_0 \imp \phi_1)$.

These two generators already illustrate the role of shared metavariables.
The same metavariable assignment must be used consistently across both source
and target maps: the occurrences of $\phi_0$ and $\phi_1$ in the conclusion of
$\mathsf{wi}$ are not new variables, but the same placeholders that appeared in
its hypotheses.
This is exactly the kind of bookkeeping that is represented structurally by the
syntax maps of Section~\ref{section:syntax-and-proof}.

Putting $\mathsf{wn}$ and $\mathsf{wi}$ together allows us to derive that the
negation of an implication is also well-formed.
Concretely, from hypotheses
$\mathsf{wff}(\phi_0)$ and $\mathsf{wff}(\phi_1)$ one first uses
$\mathsf{wi}$ to obtain $\mathsf{wff}(\phi_0 \imp \phi_1)$, and then applies
$\mathsf{wn}$ to conclude
$\mathsf{wff}(\neg(\phi_0 \imp \phi_1))$.
The derivation of this statement is depicted below.
\begin{figure}[H]
\centering
\includegraphics[width=0.8\linewidth]{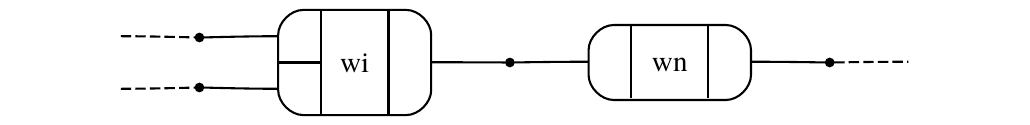}
\end{figure}
This example is deliberately elementary, but it shows the two key ingredients
already at work: syntax maps describe the formulae mentioned by an inference
rule, and proof generators compose exactly as derivations compose.
Subsequent proof rules for propositional and first-order logic will build on
the same pattern.

\subsection{Propositional Logic}

We can now encode some axioms of propositional logic,
which will be sufficient to write the derivation in Figure
\ref{figure:diagram-of-id}.
We now spell out the axioms and inference rules used; namely modus ponens
together with $\mathsf{ax\text{-}1}$ and $\mathsf{ax\text{-}2}$:
\[
  \mathsf{ax\text{-}mp} :
  (\phi_0,\phi_1 \mapsto (\vdash(\phi_0),\vdash(\phi_0 \imp \phi_1)))
  \to
  (\phi_0,\phi_1 \mapsto \vdash(\phi_1))
\]
\[
  \mathsf{ax\text{-}1} :
  (\phi_0,\phi_1 \mapsto (\mathsf{wff}(\phi_0),\mathsf{wff}(\phi_1)))
  \to
  (\phi_0,\phi_1 \mapsto \vdash(\phi_0 \imp (\phi_1 \imp \phi_0)))
\]
\[
  \mathsf{ax\text{-}2} :
  (\phi_0,\phi_1,\phi_2 \mapsto
    (\mathsf{wff}(\phi_0),\mathsf{wff}(\phi_1),\mathsf{wff}(\phi_2)))
  \to
  (\phi_0,\phi_1,\phi_2 \mapsto
    \vdash((\phi_0 \imp (\phi_1 \imp \phi_2))
    \imp ((\phi_0 \imp \phi_1) \imp (\phi_0 \imp \phi_2))))
\]
As before, the source maps record the hypotheses required to form the rule,
while the target maps record the proposition proved by that rule.
Together with the well-formedness generators above, these proof generators are
enough to express the derivation of the identity implication in
Figure~\ref{figure:diagram-of-id}.

\subsection{Quantifiers and First Order Logic}
As a final comment, we must address the encoding of \emph{quantifiers}.
These present a particular challenge as \emph{binders} of variables.
In the present framework, the key point is that variables are not represented
primarily by names, but by their incidence in the underlying syntax object.
Accordingly, a quantified formula should not be thought of as a string together
with a side condition on free and bound names.
Rather, the variable bound by a quantifier is represented by explicit sharing in
the syntax itself: the same metavariable is fed both into the quantifier node
and into the formula being quantified.

For the fragment we consider here, we add a single constructor
\[
  \forall : 2 \to 1,
\]
whose first input represents the bound variable and whose second input
represents the body of the quantified formula.
Diagrammatically, a formula such as $\forall x.\,\phi(x)$ is therefore encoded
by a single shared input wire feeding both the variable slot of $\forall$ and
the occurrences of that variable inside $\phi$.
This is exactly the sort of sharing that is awkward to express cleanly with a
tree syntax plus named substitution, but is immediate in the present setting.

The first proof generator involving quantification is universal
generalization:
\[
  \mathsf{ax\text{-}gen} :
  (x,\phi \mapsto \vdash(\phi))
  \to
  (x,\phi \mapsto \vdash(\forall(x,\phi)))
\]
This should be read with some care.
The metavariable $x$ is present in the common context of the rule, but it need
not occur freely in the premise $\phi$.
That fact is represented structurally rather than by an external side
condition: if the body $\phi$ ignores the variable input, then the syntax map
for the premise simply discards it.

These examples illustrate the main point about binders in Metacat.
The binding behaviour of $\forall$ is not enforced by a special substitution
operation built into proofs; instead, it is already present in the syntax map
describing the rule.
In a fuller treatment one would distinguish variable sorts explicitly, as in
Metamath's treatment of set variables, but we omit that additional structure
here in order to keep the example focused on the categorical representation of
sharing.

\section{Conclusions and Future Work}
\label{section:conclusions}

We have presented a categorical framework for formal systems in which syntax is
represented by a free cartesian PROP and proofs by a free PROP over that
syntax.
In this formulation, inference rules are many-input, many-output operations
equipped with explicit source and target maps in syntax, and proof checking is
separated from proof syntax itself.
This makes it possible to represent invalid derivations syntactically while
giving a precise checking semantics in terms of pattern matching and
construction of syntax.

The symmetric monoidal structure is not an additional decoration on top of an
existing proof formalism.
Rather, it is the basic shape already present in proofs with multiple
hypotheses and conclusions.
Making that structure explicit gives a uniform language for proof assembly and
opens the door to string-diagrammatic and combinatorial representations of
formal reasoning.

We also described a small first-order case study in the style of Metamath,
showing how well-formedness judgements, propositional axioms, and quantifier
rules can be encoded in this framework.

Several directions remain open.
The most immediate is tooling: definitions, abstraction mechanisms, and better
proof authoring support are all needed if Metacat is to be used as a practical
theorem-proving environment.
In addition, we hope that the combinatorial nature of this proof representation
can surface the inherent parallelism in proofs, and by leveraging data-parallel
syntax representations like~\cite{dpafsd} provide automatic parallelisation of
proof checking.
On the theoretical side, it would be interesting to study richer logical
fragments and to make more systematic use of categorical structure to relate
different formal systems by translation and comparison.

\nocite{*}
\bibliographystyle{eptcs}
\bibliography{main}

@book{metamath,
  title={Metamath: A Computer Language for Mathematical Proofs},
  author={Megill, Norman and Wheeler, David A.},
  year={2019},
  publisher={Lulu Press},
  url={https://us.metamath.org/downloads/metamath.pdf}
}

@phdthesis{mm0,
  title={Metamath Zero: From Logic to Proof Assistant to Verified Compilation},
  author={Carneiro, Mario},
  school={Carnegie Mellon University},
  year={2022},
  url={https://digama0.github.io/mm0/thesis.pdf}
}

@article{sdrt1,
  title={String Diagram Rewrite Theory I: Rewriting with Frobenius Structure},
  author={Bonchi, Filippo and Gadducci, Fabio and Kissinger, Aleks and Soboci{\'n}ski, Pawe{\l} and Zanasi, Fabio},
  journal={arXiv preprint arXiv:2012.01847},
  year={2020}
}

@article{sdrt2,
  title={String Diagram Rewrite Theory II: Rewriting with Monoidal Structure},
  author={Bonchi, Filippo and Gadducci, Fabio and Kissinger, Aleks and Soboci{\'n}ski, Pawe{\l} and Zanasi, Fabio},
  journal={arXiv preprint arXiv:2104.14086},
  year={2021}
}

@article{sdrt3,
  title={String Diagram Rewrite Theory III: Confluence and Termination},
  author={Bonchi, Filippo and Gadducci, Fabio and Kissinger, Aleks and Soboci{\'n}ski, Pawe{\l} and Zanasi, Fabio},
  journal={arXiv preprint arXiv:2201.XXXX},
  year={2022}
}

@article{girard1987linear,
  title={Linear Logic},
  author={Girard, Jean-Yves},
  journal={Theoretical Computer Science},
  volume={50},
  pages={1--101},
  url={https://girard.perso.math.cnrs.fr/linear.pdf},
  year={1987}
}

@article{curien2005linear2,
  title={Introduction to Linear Logic and Ludics, Part II},
  author={Curien, Pierre-Louis},
  journal={arXiv preprint cs/0501039},
  year={2005}
}

@book{mellies2009categorical,
  title={Categorical Semantics of Linear Logic},
  author={Melli{\`e}s, Paul-Andr{\'e}},
  year={2009},
  note={Lecture notes}
}

@misc{dafol,
      title={Diagrammatic Algebra of First Order Logic},
      author={Filippo Bonchi and Alessandro Di Giorgio and Nathan Haydon and Pawel Sobocinski},
      year={2024},
      eprint={2401.07055},
      archivePrefix={arXiv},
      primaryClass={cs.LO},
      url={https://arxiv.org/abs/2401.07055},
}

@article{deMoura2021lean4,
  title={Lean 4: A Programming Language and Theorem Prover},
  author={de Moura, Leonardo and Ullrich, Sebastian},
  journal={arXiv preprint arXiv:2104.XXXXX},
  year={2021}
}

@inproceedings{norell2007agda,
  title={Towards a Practical Programming Language Based on Dependent Type Theory},
  author={Norell, Ulf},
  booktitle={International Conference on Types for Proofs and Programs (TYPES 2006)},
  pages={175--189},
  year={2007},
  publisher={Springer}
}

@article{fox,
author  = {Fox, Thomas},
title   = {Coalgebras and Cartesian Categories},
journal = {Communications in Algebra},
volume  = {4},
number  = {7},
pages   = {665--667},
year    = {1976},
month   = {jan},
issn    = {0092-7872},
doi     = {10.1080/00927877608822127}
}

@misc{dpafsd,
      title={Data-Parallel Algorithms for String Diagrams},
      author={Paul Wilson and Fabio Zanasi},
      year={2023},
      eprint={2305.01041},
      archivePrefix={arXiv},
      primaryClass={math.CT},
      url={https://arxiv.org/abs/2305.01041},
}

@misc{chyp,
  author       = {Aleks Kissinger},
  title        = {Chyp: An Interactive Theorem Prover for String Diagrams},
  year         = {2023},
  howpublished = {\url{https://github.com/akissinger/chyp}},
  note         = {Version 0.5.2, Apache-2.0 License}
}

\appendix
\section{Labeled Identity Derivation}
\label{section:appendix-id}
For reference, Figure~\ref{figure:id-labeled} shows a version of the
identity derivation from Figure~\ref{figure:diagram-of-id}, where nodes are labeled with
a text representation of syntax maps using $x_i$ for metavariables.

\begin{figure}[H]
\centering
\includegraphics[width=\linewidth]{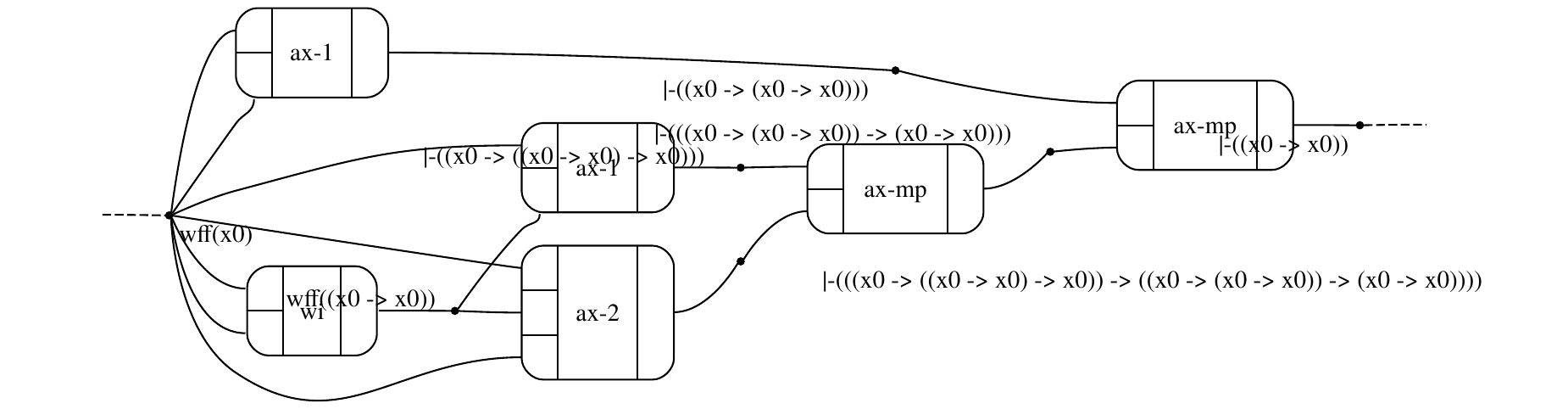}
\caption{A version of the derivation of $\vdash P \imp P$ labeled with tree values at each node}
\label{figure:id-labeled}
\end{figure}

\end{document}